\documentclass[preprint,12pt]{amsart}

\usepackage[margin=1.5in]{geometry}

\usepackage[utf8]{inputenc}
\usepackage[T1]{fontenc}
\usepackage{lmodern}
\usepackage[english]{babel}
\usepackage{amsfonts,amsmath,mathrsfs,amssymb,amsthm,amscd,latexsym,amstext,amsxtra}
\usepackage{graphicx}
\usepackage[all]{xy}
\usepackage{enumerate}
\usepackage{booktabs}
\usepackage{caption}
\usepackage{csquotes}

\usepackage[style=numeric, backend=biber]{biblatex}

\usepackage{tikz}
\usetikzlibrary{cd}

\theoremstyle{plain}

\newtheorem{theorem}{Theorem}[section]

\newtheorem{lemma}[theorem]{Lemma}

\theoremstyle{definition}
\newtheorem{definition}[theorem]{Definition}

\newtheorem{remark}[theorem]{Remark}

\newcommand{\OO}{\mathcal{O}}

\newcommand{\set}[1]{\left\lbrace #1 \right\rbrace}

\makeatletter
\def\ps@pprintTitle{%
  \let\@oddhead\@empty{}
  \let\@evenhead\@empty{}
  \let\@oddfoot\@empty{}
  \let\@evenfoot\@oddfoot{}
}
\makeatother

\title{A Curve of Secants to the Kummer Variety from Degenerate Points}
\author{José Alejandro Aburto}
\thanks{Supported by ANID, Doctorado Nacional 2018, folio 21180924}
\email{jose.aburto@ug.uchile.cl}

\keywords{Abelian Varieties, Secants, Stratification}
\subjclass[2020]{14K25,14H40}%

\begin{document}

\maketitle

\begin{abstract}
We prove that, under certain geometric conditions, that only \(m-1\) different non-degenerate \((m+2)\)-secant 
\(m\)-planes
plus one degenerate \((m+2)\)-secant \(m\)-plane to the Kummer variety implies the existence of a curve of 
${(m+2)}$-secants to the Kummer variety.
This is done by constructing a set of equations in terms of theta functions from the germ of a curve on the described points.
The relation between those equations allows to proceed by induction to get the entire desired curve since the first of them is equivalent to the hypothesis that we ask.
\end{abstract}

\section{Introduction}
Let \(\left(X,\Theta\right)\) be a principally polarized complex abelian variety.

Krichever, in \cite{Krichever2010} and \cite{KricheverFlexSolved}, showed that the existence
of one trisecant line to the Kummer variety of \(X\) implies that \(X\) is a Jacobian variety.
This was established using analytic methods involving integrable systems and wave functions.

Krichever thereby proved the Welters' conjecture \cite{Welters1984} in full. 
Following Welters' work, several attempts were made to prove the conjecture using purely
algebro-geometric methods.
One particular case, namely the flex case, was settled by Debarre \cite{DebarreTrisecants1},
who proved that the conjecture holds under certain geometric conditions.

The purpose of this article is to generalize the approach introduced by Debarre.
We prove that the existence of 
a curve of $(m+2)$-secant $m$-planes to the Kummer variety can be achieved by the existence of a finite number of
$(m+2)$-secant \(m\)-planes to the Kummer variety of $X$.
More precisely, by one degenerate \((m+2)\)-secant \(m\)-plane plus \((m-1)\) non-degenerate \(m+2\)-secant \(m\)-planes.

The technique introduced by Debarre \cite{DebarreTrisecants1} for trisecants relies on the Koszul complex
in an essential way. Here we extend this approach to arbitrary \((m+2)\)-secants \(m\)-planes by iterating
the Koszul complex. Although the final construction is straightforward in retrospect,
its implementation involves a non-trivial inductive procedure that reveals the underlying hierarchy of
secant conditions.

Unfortunately, our hypothesis is also the generalization of that of Debarre;
we have been unable to circumvent it. Note that for \(m=1\), the hypothesis coincides verbatim with Debarre's.

 \section{Preliminaries}\label{sec:preliminaries}
Let \((X,\lambda)\) be an indecomposable principally polarized abelian variety of dimension $g$,
and let $\Theta$ be a symmetric representative of the polarization $\lambda$.
We can take a basis \(\{\theta\}\) of \(H^0(X,\mathcal{O}_X(\Theta))\) and a basis
\(\set{\theta_0,\ldots,\theta_{N}}\) (with $N=2^g-1$) of the $2^g$-dimensional space
\(H^0(X,\mathcal{O}_X(2\Theta))\) which satisfies the addition formula: 
	\begin{equation*}
		\theta(z+w)\theta(z-w) = \sum_{j=0}^N \theta_j(z)\theta_j(w),
	\end{equation*}
for all $z,w\in X$. 

Since symmetric representatives $\Theta$ of the polarization $\lambda$ differ by translations by points of order 2,
 we have that the linear system $\left|2\Theta\right|$ is independent of the choice of $\Theta$.
 It defines a morphism $K:X\to\left|2\Theta\right|$ called the \emph{Kummer morphism}.
This morphism can be given by \({K(x)=[\theta_0(x):\cdots:\theta_N(x)]}\).

Let $x\in X$ be any point, we write $\Theta_x$ for the divisor $\Theta+x$ and $\theta_x$ for the section $z\mapsto\theta(z-x)$ of $H^0\left(X,\OO_X(\Theta_x)\right)$.

We now define what is an \((m+2)\)-secant \(m\)-plane.

\begin{definition}
	Let \(Y\subset X\) be an Artinian scheme of length \(m+2\). Define the scheme:
		\begin{equation*}
			V_Y	=	2\left\lbrace\zeta\in X\;:\; \exists W\in\mathbb{G}\left(m, 2^g-1\right)\text{ such that } \zeta+Y\subset K^{-1}W\right\rbrace.
		\end{equation*}
	We will say that \(X\) have \((m+2)\)-secant \(m\)-planes if such a subscheme is not empty and \(K\)
    restricted to \(\zeta+Y\) is an embedding.
\end{definition}

In particular, if \(Y=\{a_1,\ldots,a_{m+2}\}\) has
reduced structure we can write

\begin{equation}
	V_Y	= 2\left\lbrace\zeta\in X\;:\; K\left(\zeta+a_1\right)\wedge\cdots\wedge K\left(\zeta+a_{m+2}\right)=0\right\rbrace.
\end{equation}
Since \(K\) is an even function 
we have that \(-a_1-a_2\in V_Y\).

We will assume that there is no \((n+2)\)-secant \(n\)-planes for \(n<m\) in what follows.
\subsection{Hierarchy}\label{subsec:hierarchy}
Our intention is to translate the condition \({\dim_{-a_1-a_2}V_Y>0}\) into an infinite set of equations.
We will look for a smooth germ with \(D_1\neq 0\).

The condition \({\dim_{-a_1-a_2}V_Y>0}\) is equivalent to the existence of a formal curve
\begin{equation*}
	2\zeta(\epsilon)=-a_1-a_2+C(\epsilon),\quad\text{with } C(\epsilon)=\sum_{j\geq 1}W^{(j)}\epsilon^j
\end{equation*}
contained in \(V_Y\) with \(W^{(j)}=\left(W^{(j)}_1,\ldots,W^{(j)}_{g}\right)\in\mathbb{C}^g\).
We define the following differential operators:

\begin{align*}
	D_j			& = \sum_{i=1}^{g} W^{(j)}_i \frac{\partial}{\partial z_i}
	\intertext{and set}
	D(\epsilon)	& = \sum_{j\geq 0}	D_j \epsilon^j.
\end{align*}
Also set,
\begin{align*}
	\Delta_s	& = \sum_{i_1+2i_2+\cdots+si_s=s}\frac{1}{i_1!\cdots i_s!} D_1^{i_1}\cdots D_s^{i_s}.
\end{align*}
Then, for a \(C^\infty\) function \(f\) in \(\mathbb{C}^g\), and a constant \(C_0\in\mathbb{C}\), one has
\begin{align}
	f\left(C_0+C(\epsilon)\right)	& = \sum_{s\geq 0} \Delta_s(f)\vert_{\zeta=C_0}\epsilon^s.
	\intertext{So that}
	e^{D(\epsilon)}	& = \sum_{s\geq 0} \Delta_s(\cdot)\vert_{\zeta=C_0} \epsilon^s.
\end{align}
Therefore, the existence of such a formal curve turns into the following equivalent relation:

\begin{equation}
	\sum_{j=1}^{m+2} \alpha_j(\epsilon) K\left(a_j+\zeta(\epsilon)\right) = 0. \label{equation:mPlanes:firstKCondition}
\end{equation}
Here \(\alpha_j(\epsilon)\), for \(1\leq j\leq m\), are relatively prime elements in
\(\mathbb{C}\left[\left[ \epsilon\right]\right]\).
Set \({u=a_1-\frac{1}{2}\left(a_1+a_2\right)}\) and \({b_j=a_{j+2}-\frac{1}{2}\left(a_1+a_2\right)}\)
for \(1\leq j\leq m\).
Now, equation (\ref{equation:mPlanes:firstKCondition}) becomes:
\begin{align}
	\alpha_1(\epsilon)K\left(u+\frac{1}{2}C(\epsilon)\right)+\alpha_2(\epsilon)K\left(-u+\frac{1}{2}C(\epsilon)\right) & \nonumber\\
	+\sum_{j=1}^m \alpha_{j+2}(\epsilon)K\left(b_j+\frac{1}{2}C(\epsilon)\right) &= 0 \label{equation:mPlanes:ReplacingThedifference}
\end{align}

Taking dot product with the left side of equation (\ref{equation:mPlanes:ReplacingThedifference}), using the Riemann addition formula we get:
\begin{align*}
&\alpha_1(\epsilon)\theta\left(z+u+\frac{1}{2}C(\epsilon)\right)\theta\left(z-u-\frac{1}{2}C(\epsilon)\right)+\alpha_2(\epsilon)\theta\left(z-u+\frac{1}{2}C(\epsilon)\right)\theta\left(z+u-\frac{1}{2}C(\epsilon)\right) \nonumber  \\
&+ \sum_{j=1}^m \alpha_{j+2}(\epsilon) \theta\left(z+b_j+\frac{1}{2}C(\epsilon)\right)\theta\left(z-b_j-\frac{1}{2}C(\epsilon)\right) = 0
\end{align*}
Write \(P(z,\epsilon)=\sum_{s\geq 0} P_s(z)\epsilon^s\) for the left side of the last equation.
Note that \(P_s\in H^0\left(X,\mathcal{O}_X(2\Theta)\right)\). We have that
\begin{equation*}
P_0 = \left(\alpha_1(0)+\alpha_2(0)\right)\theta_{u}\theta_{-u}+\sum_{j=1}^m \alpha_{j+2}(0)\theta_{b_j}\theta_{-b_j}.
\end{equation*}
Since there are no \((n+2)\)-secant \(n\)-planes, with \(n<m\), \(P_0\) vanishes if and only if \({\alpha_1(0)+\alpha_2(0)=0}\) and \({\alpha_j(0)=0}\) for any \(3\leq j\leq m+2\). Given that \(\alpha_1(\epsilon),\ldots,\alpha_{m+2}(\epsilon)\) are relatively prime, we get that \(\alpha_1(\epsilon)\) and \(\alpha_2(\epsilon)\) are units.
We may assume:
\[\alpha_1(\epsilon)=1+\sum_{i\geq 1} \alpha_{1,i} \epsilon^i,\qquad \alpha_2(\epsilon)=-1,\]
\[\alpha_j(\epsilon)=\sum_{i\geq 1} \alpha_{j,i} \epsilon^i, \text{ for } j \geq 2.\]
Observe that
\begin{equation*}
P_1=\alpha_{1,1}\theta_{u}\theta_{-u}+D_1\theta_{-u}\cdot\theta_u-D_1\theta_u\theta_{-u}+\sum_{j=1}^m \alpha_{j+2,1}\theta_{\beta_{b_j}}\theta_{-\beta_{b_j}}
\end{equation*}
where \(D_1\neq 0\) and \(2u\neq 0\). Recall that we are assuming that there are no \((n+2)\)-secants,
with \(n<m\), so \(\alpha_{j+2,1}\neq 0\) for any \(3\leq j\leq m+2\).
Allowing linear changes in the \(D_i\) operators, we may assume that \(\alpha_{m+2}(\epsilon)=\epsilon\). We state this procedure as follows:

\begin{theorem}\label{theorem:hierarchyTrickDegeneratedGeneralized}
The abelian variety \(X\) satisfies \(\dim_{-a_1-a_2}V_Y>0\) if and only if there exist complex numbers
\(\alpha_{j,i}\), with \(1\leq j\leq m+1\), \(j\neq 2\), \(i\geq 1\); and constant vector fields \(D_1\neq 0, D_2,D_3\ldots\) on \(X\) such that
the sections \(P_s\) defined before, vanish for all positive integers \(s\).
\end{theorem}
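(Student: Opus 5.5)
The plan is to prove both implications by passing between the formal curve in \(V_Y\) and the identity \(P(z,\epsilon)\equiv 0\) of power series in \(\epsilon\) with coefficients in \(H^0(X,\mathcal{O}_X(2\Theta))\), using the injectivity of the addition formula.

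\emph{Direct implication.} Assume \(\dim_{-a_1-a_2}V_Y>0\). Choose an irreducible curve germ in \(V_Y\) through \(-a_1-a_2\) and take its normalization. This gives a formal arc \(2\zeta(\epsilon)=-a_1-a_2+C(\epsilon)\) with \(C(\epsilon)\neq 0\). Reparametrizing \(\epsilon\), we may take the first nonzero coefficient to be \(W^{(1)}\), so \(D_1\neq 0\).

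Next we need the coefficients \(\alpha_j(\epsilon)\). Along the arc, the \(m+2\) vectors \(K(a_j+\zeta(\epsilon))\), lifted to \(\mathbb{C}^{2^g}\) as power series, have vanishing wedge product. They are therefore linearly dependent over the field \(\mathbb{C}((\epsilon))\). Clearing denominators and dividing by common factors, we get relatively prime \(\alpha_j(\epsilon)\in\mathbb{C}[[\epsilon]]\) satisfying \eqref{equation:mPlanes:firstKCondition}.

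By the Riemann addition formula, pairing \eqref{equation:mPlanes:firstKCondition} with \((\theta_0(z),\dots,\theta_N(z))\) gives exactly \(P(z,\epsilon)=0\), so every \(P_s\) vanishes. The discussion preceding the statement then normalizes the \(\alpha\)'s. Since \(P_0=0\) and there are no lower secants, \(\alpha_1,\alpha_2\) are units and the others lie in \((\epsilon)\), and we rescale to get \(\alpha_2=-1\) and \(\alpha_1(0)=1\).

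The step needing care is \(\alpha_{m+2}(\epsilon)=\epsilon\). The no-lower-secant hypothesis, applied to \(P_1\), forces \(\alpha_{m+2,1}\neq0\), so \(\alpha_{m+2}=\epsilon\cdot(\text{unit})\). We then perform a formal change of parameter \(\epsilon\mapsto\tilde\epsilon=\alpha_{m+2}(\epsilon)\), which is invertible because \(\alpha_{m+2,1}\neq 0\). This changes the coefficients \(W^{(j)}\) by a triangular substitution, which is the "linear change in the \(D_i\)". It preserves \(D_1\neq0\) up to a nonzero scalar and keeps \(\alpha_2=-1\). I expect this reparametrization to be the only delicate point: one must check that it is compatible with all the earlier normalizations simultaneously.

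\emph{Converse.} Given \(D_j\) and \(\alpha_{j,i}\) with all \(P_s=0\), define \(C(\epsilon)=\sum W^{(j)}\epsilon^j\) and \(\zeta(\epsilon)\) accordingly. Then \(P(z,\epsilon)\equiv0\) as a formal series. Since \(\theta_0,\dots,\theta_N\) form a basis of \(H^0(X,\mathcal{O}_X(2\Theta))\), the vanishing of \(\sum_k \theta_k(z)\,v_k(\epsilon)\) for all \(z\) forces the vector \(\sum_j\alpha_j(\epsilon)K(a_j+\zeta(\epsilon))\) to vanish coefficientwise, which gives \eqref{equation:mPlanes:firstKCondition}. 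Since \(\alpha_1\) is a unit, this linear relation is nontrivial, so the wedge product vanishes along the formal curve and the curve lies in \(V_Y\). Because \(D_1\neq0\), the formal curve is nonconstant. A nonconstant formal arc in the scheme \(V_Y\) at \(-a_1-a_2\) forces \(\dim_{-a_1-a_2}V_Y>0\), since a zero-dimensional local ring admits no injective formal arc (by Artin approximation, or because a formal arc inside an Artinian scheme must be constant). This establishes the converse.
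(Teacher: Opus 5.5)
Your overall route is the paper's: a formal arc, linear dependence over $\mathbb{C}((\epsilon))$, the addition formula, normalization from $P_0$ and $P_1$ using the absence of lower secants, then the reparametrization $\tilde\epsilon=\alpha_{m+2}(\epsilon)$. Your converse is correct and more explicit than the paper, resting on the fact that an Artinian local ring admits only constant formal arcs.

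The genuine gap is the sentence ``Reparametrizing $\epsilon$, we may take the first nonzero coefficient to be $W^{(1)}$.'' An invertible substitution $\epsilon\mapsto c_1\epsilon+c_2\epsilon^2+\cdots$ with $c_1\neq 0$ preserves $\mathrm{ord}_\epsilon C(\epsilon)$. For the normalization of a branch, this order is the multiplicity of that branch at $-a_1-a_2$. So for a singular branch (e.g.\ a cusp), every parametrization has $W^{(1)}=0$.

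This is not cosmetic. If $D_1=0$, then $P_1=\alpha_{1,1}\theta_u\theta_{-u}+\sum_{j=1}^m\alpha_{j+2,1}\theta_{b_j}\theta_{-b_j}$, and the no-lower-secant hypothesis forces $\alpha_{m+2,1}=0$. Then $\tilde\epsilon=\alpha_{m+2}(\epsilon)$ is not an admissible change of parameter, and the normalization $\alpha_{m+2}=\epsilon$ fails. So the step you singled out as delicate is routine once $D_1\neq 0$ is known; the real difficulty is obtaining $D_1\neq 0$.

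The paper does not derive $D_1\neq 0$ from $\dim_{-a_1-a_2}V_Y>0$ either. It simply stipulates ``We will look for a smooth germ with $D_1\neq 0$'', so its forward implication is really about a curve germ in $V_Y$ that is smooth at $-a_1-a_2$. You should state that assumption explicitly, i.e.\ choose a branch of $V_Y$ smooth at $-a_1-a_2$, rather than claim it follows by reparametrization. With that assumption (and, as in the paper, no degenerate lower secants, so that $\alpha_{m+2,1}\neq 0$), the rest of your forward argument is correct. The paper's main theorem uses only the converse direction, which your argument establishes.
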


\begin{remark}
Note that \(P_s\) only depends on the corresponding \(\alpha_{j,i}\), and \(D_i\) with \(i\leq s\).
Hence, we can write \(P_s\) as:
    \begin{equation*}
	P_s = Q_s +\alpha_{1,s}\theta_u\theta_{-u}+D_s\theta_{-u}\cdot\theta_u-D_s\theta_u\cdot\theta_{-u}+\sum_{j=1}^{m-1}\alpha_{j+2,s}\theta_{b_j}\theta_{-b_j}
    \end{equation*}
	where \(Q_s\) does not depend on \(\alpha_{j,s}\) nor \(D_s\).
    Furthermore, \(Q_s\in H^0\left(X,\mathcal{O}_X(2\Theta)\right)\).
\end{remark}

\begin{lemma} \label{lemma:reducing2QsGeneralization}
Fix an integer \(s\geq 1\). Suppose that \(\Theta_{u}\cdot\Theta_{-u}\cdot\Theta_{b_1}\ldots\cdot\Theta_{b_{m-1}}\)
is a complete intersection.
Then, the section \(P_s\in H^0\left(X,\mathcal{O}_X(2\Theta)\right)\) vanishes for some choice of
\(\alpha_j,s\) and \(D_s\) for \(j\in\{1,3,4,\ldots,m+1\}\)
if and only if the section \(Q_s\in H^0\left(X,\mathcal{O}_X(2\Theta)\right)\)
vanishes when restricted to \({\Theta_{u}\cdot\Theta_{-u}\cdot\Theta_{b_1}\ldots\cdot\Theta_{b_{m-1}}}\).
\end{lemma}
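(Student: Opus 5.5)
The plan is to describe the kernel of the restriction map
\[
H^0\left(X,\OO_X(2\Theta)\right)\longrightarrow H^0\left(Z,\OO_Z(2\Theta)\right),\qquad Z=\Theta_{u}\cdot\Theta_{-u}\cdot\Theta_{b_1}\cdots\Theta_{b_{m-1}},
\]
using the Koszul complex of $Z$, following Debarre's argument for $m=1$. The goal is to show that this kernel is exactly the span of $\theta_u\theta_{-u}$, of the sections $D\theta_{-u}\cdot\theta_u-D\theta_u\cdot\theta_{-u}$ with $D$ a constant vector field, and of the $\theta_{b_j}\theta_{-b_j}$ for $1\le j\le m-1$. The term with $b_m$ has coefficient $\alpha_{m+2}(\epsilon)=\epsilon$, so it is fixed and is absorbed into $Q_s$.

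Granting this description of the kernel, the lemma follows at once.
\begin{itemize}
\item The ``only if'' direction is immediate. Each of $\theta_u\theta_{-u}$, $D_s\theta_{-u}\cdot\theta_u-D_s\theta_u\cdot\theta_{-u}$ and $\theta_{b_j}\theta_{-b_j}$ vanishes on $Z$, since each term contains a factor $\theta_{\pm u}$ or $\theta_{b_j}$. So if $P_s=0$, then $Q_s|_Z=0$.
\item For the ``if'' direction, $Q_s|_Z=0$ puts $Q_s$ in the kernel. Hence $Q_s$ is a combination of the generators listed above. Choosing $\alpha_{1,s}$, $\alpha_{j+2,s}$ and $D_s$ to be minus the corresponding coefficients gives $P_s=0$.
\end{itemize}

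To compute the kernel, write $c_0=u$, $c_1=-u$, $c_{i+1}=b_i$ for $1\le i\le m-1$. Since $Z$ is a complete intersection of the $m+1$ divisors $\Theta_{c_i}$, the Koszul complex
\[
0\to \OO_X\Bigl(2\Theta-\textstyle\sum_{i}\Theta_{c_i}\Bigr)\to\cdots\to\bigoplus_{i<k}\OO_X(2\Theta-\Theta_{c_i}-\Theta_{c_k})\to\bigoplus_i \OO_X(2\Theta-\Theta_{c_i})\to\OO_X(2\Theta)\to\OO_Z(2\Theta)\to 0
\]
is exact. The term in homological degree $k$ is algebraically equivalent to a sum of translates of $(2-k)\Theta$.

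Chasing the complex, split into short exact sequences, shows the following.
\begin{itemize}
\item The kernel is generated by the image of $\bigoplus_i H^0(\OO_X(2\Theta-\Theta_{c_i}))$ together with a contribution from $H^1$ of the degree $2$ term. The first group is spanned by the $\theta_{c_i}\theta_{-c_i}$, since $2\Theta-\Theta_{c_i}\sim\Theta_{-c_i}$ has a one-dimensional space of sections.
\item One also needs $H^{k-1}$ of the degree $k$ term to vanish for $3\le k\le m+1$. Those terms are translates of negative multiples of $\Theta$, whose cohomology is concentrated in degree $g$. Since $Z$ is a complete intersection of $m+1$ divisors, $m+1\le g$, so $k-1<g$ and the required vanishing holds.
\item The degree $2$ terms are line bundles in $\mathrm{Pic}^0$, namely $\OO_X(2\Theta-\Theta_{c_i}-\Theta_{c_k})$, which is trivial iff $c_i+c_k=0$. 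Nontrivial elements of $\mathrm{Pic}^0$ have no cohomology. So the only contribution comes from the pair $(u,-u)$, where the bundle is $\OO_X$. This requires that no other pair sums to zero, i.e.\ $b_i\neq -b_j$, $b_i\ne\pm u$; I expect such coincidences to be excluded by the complete-intersection hypothesis together with the absence of lower secants.
\end{itemize}

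The main obstacle is to identify the image of $H^1(\OO_X)\cong\mathbb{C}^g$, the space of constant vector fields, under the connecting map with the sections $D\theta_{-u}\cdot\theta_u-D\theta_u\cdot\theta_{-u}$. I would first do this on the two-term Koszul complex of $\Theta_u\cdot\Theta_{-u}$, reproducing Debarre's computation. A class in $H^1(\OO_X)$ is represented by a cocycle of linear functions, i.e.\ the derivative of a translation. Lifting it to $H^0(\OO_X(\Theta_{-u}))\oplus H^0(\OO_X(\Theta_{u}))$ locally gives $(D\theta_{-u},-D\theta_{u})$, and its image in $H^0(2\Theta)$ is the stated Wronskian-type section. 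I would then check that adding the divisors $\Theta_{b_j}$ does not introduce new $H^1$ classes, by the vanishing in the second bullet above.
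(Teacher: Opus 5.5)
Your argument is correct, but it is organized differently from the paper's.

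\textbf{How the two routes differ.} The paper never writes down the Koszul resolution of $Z$ on $X$. It removes one divisor at a time, using restriction sequences $0\to\OO_{Z'}(\Theta_{-b_{m-1}})\xrightarrow{\cdot\theta_{b_{m-1}}}\OO_{Z'}(2\Theta)\to\OO_Z(2\Theta)\to 0$ on partial intersections $Z'=\Theta_u\cdot\Theta_{-u}\cdot\Theta_{b_1}\cdots\Theta_{b_{m-2}}$, and so on downward; these yield $\alpha_{m+1,s},\dots,\alpha_{3,s}$ one by one. Only on $\Theta_u\cdot\Theta_{-u}$ does it run Debarre's $m=1$ argument, getting $D_s$ from $H^0(\Theta_u,\OO_{\Theta_u}(\Theta_u))=\langle D\theta_u\rangle$ and $\alpha_{1,s}$ from $H^0(X,\OO_X(\Theta_{-u}))$.

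You instead compute the whole kernel of $H^0(X,\OO_X(2\Theta))\to H^0(Z,\OO_Z(2\Theta))$ in one pass. Every cohomology group lives on $X$, where vanishing for line bundles algebraically equivalent to $n\Theta$ is standard. The single obstruction is the $H^1(\OO_X)$ of the summand $\OO_X(2\Theta-\Theta_u-\Theta_{-u})\cong\OO_X$.

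Your connecting-map step carries the same information as the paper's description of $H^0(\Theta_u,\OO_{\Theta_u}(\Theta_u))$, but state it in the right direction. Lift $D\theta_{-u}\cdot\theta_u-D\theta_u\cdot\theta_{-u}$ on the universal cover to $(D\theta_{-u},-D\theta_u,0,\dots,0)$. Writing $\theta(z+\lambda)=e_\lambda(z)\theta(z)$, its failure to be equivariant is the constant $D\log e_\lambda$ times the Koszul image of the generator of that $\OO_X$ summand. Then $D\mapsto[\lambda\mapsto D\log e_\lambda]$ is an isomorphism $\mathbb{C}^g\to H^1(X,\OO_X)$.

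\textbf{What each route buys.} Yours makes explicit what the paper uses silently at each step. For instance, the paper needs $H^0(Z',\OO_{Z'}(\Theta_{-b_{m-1}}))$ to be spanned by $\theta_{-b_{m-1}}$, which is itself a Koszul computation on $X$ requiring the same non-coincidences as yours. The paper's route has the merit that its last step is Debarre's argument verbatim.

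\textbf{On those non-coincidences.} The cases $b_j=\pm u$ are indeed excluded by the complete-intersection hypothesis. The case $b_i=-b_j$ with $i\neq j$ is not, since $\Theta_{b_i}\neq\Theta_{-b_i}$ in general. If it occurred, the degree-$2$ term would contain a second copy of $\OO_X$. The kernel would then acquire the sections $D\theta_{-b_i}\cdot\theta_{b_i}-D\theta_{b_i}\cdot\theta_{-b_i}$, which are in general not in your list. What excludes this case is the linear independence of $K(u),K(b_1),\dots,K(b_m)$, in particular $\theta_{b_i}\theta_{-b_i}\neq\theta_{b_j}\theta_{-b_j}$. The paper invokes this independence, via the absence of lower secants, when it deduces $\alpha_j(0)=0$ from $P_0=0$. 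Its stepwise proof of the lemma needs the same condition, so you should state it as a hypothesis rather than expect it to follow from the complete-intersection assumption.
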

Note that  \(\Theta_{u}\cdot\Theta_{-u}\cdot\Theta_{b_1}\ldots\cdot\Theta_{b_{m-1}}\) is just
a translate of \(\Theta_{a_1}\cdot\Theta_{a_2}\cdot\ldots\cdot\Theta_{a_{m+1}}\).

\begin{proof}
	Consider the ideal sheaf exact sequence
    \begin{center}
		\begin{tikzcd}[font=\small, row sep=small, column sep=small]
    0 \arrow[r] & 
    \mathcal{O}_{\Theta_u\cdot\Theta_{-u}\cdot\Theta_{b_1}\cdots \Theta_{b_{m-2}}}(-\Theta_{b_{m-1}}) \arrow[r,"\cdot\theta_{b_{m-1}}"] & 
    \mathcal{O}_{\Theta_u\cdot\Theta_{-u}\cdot\Theta_{b_1}\cdots \Theta_{b_{m-2}}} \arrow[r] & 
    \mathcal{O}_{\Theta_u\cdot\Theta_{-u}\cdot\Theta_{b_1}\cdots \Theta_{b_{m-1}}} \arrow[r] & 
    0
\end{tikzcd}
\end{center}
    we apply \(\otimes\mathcal{O}_{\Theta_u\cdot\Theta_{-u}\cdot\Theta_{b_1}\cdots \Theta_{b_{m-2}}}\left(2\Theta\right)\), and using the Theorem of the Square we get \({2\Theta\sim \Theta_{b_{m-1}}+\Theta_{-b_{m-1}}}\), so that
	\begin{center}
		\begin{tikzcd}[font=\small, row sep=small, column sep=small]
	0\arrow[r] &
	\mathcal{O}_{\Theta_u\cdot\Theta_{-u}\cdot\Theta_{b_1}\cdots \Theta_{b_{m-2}}}\left(\Theta_{-b_{m-1}}\right)\arrow[r,"\cdot\theta_{b_{m-1}}"] &
	 \mathcal{O}_{\Theta_u\cdot\Theta_{-u}\cdot\Theta_{b_1}\cdots \Theta_{b_{m-2}}}\left(2\Theta\right)\arrow[r] &
	 \mathcal{O}_{\Theta_u\cdot\Theta_{-u}\cdot\Theta_{b_1}\cdots \Theta_{b_{m-1}}}\left(2\Theta\right)\arrow[r] & 0
		\end{tikzcd}
	\end{center}
	Then, using the cohomology of this exact sequence we get that \(Q_s\) vanishes on
	\({\Theta_u\cdot\Theta_{-u}\cdot\Theta_{b_1}\cdots \Theta_{b_{m-1}}}\) if and only if there exists a complex number \(\alpha_{m+1,s}\) such that
	\begin{equation*}
		\left.\left(Q_s+\alpha_{m+1,s} \theta_{b_{m-1}}\theta_{-b_{m-1}}\right)\right\vert_{\Theta_u\cdot\Theta_{-u}\cdot\Theta_{b_1}\cdots \Theta_{b_{m-2}}}=0.
	\end{equation*}
	We repeat the process until we get that
	\begin{equation*}
		\left.\left(Q_s+\sum_{j=1}^{m-1}\alpha_{j+2,s}\theta_{b_j}\theta_{-b_j}\right)\right\vert_{\Theta_u\cdot\Theta_{-u}}=0.
	\end{equation*}
	Now, as before we consider the exact sequence
	\begin{center}
		\begin{tikzcd}
			0 \arrow[r] &
			\mathcal{O}_{\Theta_u}\left(-\Theta_{-u}\right) \arrow[r,"\cdot\theta_{-u}"] &
			\mathcal{O}_{\Theta_u} \arrow[r] &
			\mathcal{O}_{\Theta_u\cdot\Theta_{-u}}\arrow[r] &
			0,
		\end{tikzcd}
	\end{center}
we apply \(\otimes\mathcal{O}_{\Theta_u}\left(2\Theta\right)\), and using the Theorem of the Square we get \({2\Theta\sim \Theta_u+\Theta_{-u}}\), so that
	\begin{center}
		\begin{tikzcd}
			0 \arrow[r] &
			\mathcal{O}_{\Theta_u}\left(\Theta_{u}\right) \arrow[r,"\cdot\theta_{-u}"] &
			\mathcal{O}_{\Theta_u}\left(2\Theta\right) \arrow[r] &
			\mathcal{O}_{\Theta_u\cdot\Theta_{-u}}\left(2\Theta\right)\arrow[r] &
			0.
		\end{tikzcd}
	\end{center}
For what is next note that \(H^0\left(\Theta_u,\mathcal{O}_{\Theta_u}\left(\Theta_{u}\right)\right)\) is generated by \(D\theta_u\), where \(D\) is a constant vector field.

Taking the long exact sequence in cohomology
\begin{center}
	\begin{tikzcd}[row sep=small, column sep=small]
		0 \arrow[r] &
		H^0\left(\Theta_u,\mathcal{O}_{\Theta_u}\left(\Theta_{u}\right)\right) \arrow[r,"\cdot\theta_{-u}"]\arrow[d,equals] &
		H^0\left(\Theta_u,\mathcal{O}_{\Theta_u}\left(2\Theta\right)\right) \arrow[r] &
		H^0\left(\Theta_u,\mathcal{O}_{\Theta_u\cdot\Theta_{-u}}\left(2\Theta\right)\right)\arrow[r] &
		\cdots \\
		& \left\langle D\theta_u\right\rangle & & &
	\end{tikzcd}
\end{center}
implies that \(\left(Q_s+\sum_{j=1}^{m-1}\alpha_{j+2,s}\theta_{b_j}\theta_{-b_j}\right)\) vanishes on \(\Theta_u\cdot\Theta_{-u}\) if and only if
\begin{equation*}
	\left.\left(Q_s+\sum_{j=1}^{m-1}\alpha_{j+2,s}\theta_{b_j}\theta_{-b_j}-D\theta_u\cdot\theta_{-u}\right)\right\vert_{\Theta_u}=0.
\end{equation*}

The section
\begin{equation*}
	\left(Q_s+\sum_{j=1}^{m-1}\alpha_{j+2,s}\theta_{b_j}\theta_{-b_j}-D\theta_u\cdot\theta_{-u}+D\theta_{-u}\cdot\theta_u\right)\in H^0\left(X,\mathcal{O}_{X}\left(2\Theta\right)\right)
\end{equation*}
vanishes on \(\Theta_u\) and one concludes by the following the same steps as before.
We get the short exact sequence of sheaves
\begin{center}
	\begin{tikzcd}
		0 \arrow[r] &
		\mathcal{O}_{X}\left(\Theta_{u}\right) \arrow[r] &
		\mathcal{O}_{X}\left(2\Theta\right) \arrow[r] &
		\mathcal{O}_{\Theta_u}\left(2\Theta\right)\arrow[r] &
		0.
	\end{tikzcd}
\end{center}
and consider its exact sequence in cohomology
\begin{center}
	\begin{tikzcd}
		0 \arrow[r] &
		H^0\left(X,\mathcal{O}_{X}\left(\Theta_{u}\right)\right) \arrow[r] &
		H^0\left(X,\mathcal{O}_{X}\left(2\Theta\right)\right) \arrow[r] &
		H^0\left(X,\mathcal{O}_{\Theta_u}\left(2\Theta\right)\right)\arrow[r] &
		0
	\end{tikzcd}
\end{center}
to conclude that there exists a constant \(\alpha_{1,s}\in\mathbb{C}\) such that
\begin{equation*}
	Q_s +\alpha_{1,s}\theta_u\theta_{-u}+D_s\theta_{-u}\cdot\theta_u-D_s\theta_u\cdot\theta_{-u}+\sum_{j=1}^{m-1}\alpha_{j+2,s}\theta_{b_j}\theta_{-b_j}=0.
\end{equation*}
\end{proof}

\subsection{Finite secants implies the existence of a curve}\label{subsec:finiteToCurve}
Now we show our main result. Note that having a degenerate \((m+2)\)-secant \(m\)-plane is equivalent 
to assuming that \(P_1=0\).
In this case, a degenerate \((m+2)\)-secant \(m\)-plane consists of \((m+1)\)-points of \(X\) whose image through the Kummer
morphism \(K\) lie on a \(m\)-plane of \(K(X)\) tangent with multiplicity \(2\) at one of them.

For what follows, consider the reduced subscheme \(Y=\{a_1,\ldots,a_{m+2}\}\) of \(X\).
Due to the equivalencies from the previous sections, we can achieve the following result from geometric properties of \(X\)
and the points \(u,b_1,\ldots,b_m\) of \(X\) defined in the previous section.

\begin{theorem}\label{theorem:mDifferent}
	Let \((X,\lambda)\) be an indecomposable principally polarized abelian variety of dimension \(g\) that has no \((n+2)\)-secant \(n\)-planes for \(n<m\),
    let \(\Theta\) be a symmetric representative of the polarization \(\lambda\).
	Suppose that \(\Theta_{u}\cdot\Theta_{-u}\cdot\Theta_{b_1}\ldots\cdot\Theta_{b_{m-1}}\) is a reduced complete intersection,
    and suppose that one has the following \((m+2)\)-secant \(m\)-planes:
 \begin{itemize}
	\item[\(\bullet\)] {\( K(u), K(b_1),\ldots,K(b_m) \) lie on an \(m\)-plane, tangent at \(K(u)\) and}
	\item[\(\bullet\)] {\( K\left(u-\tfrac{1}{2}(b_m+\mu)\right), K\left(-u-\tfrac{1}{2}(b_m+\mu)\right),\cdots, K\left(b_m-\tfrac{1}{2}(b_m+\mu)\right)\)}
	lie on an \(m\)-plane, for \( \mu\in\left\lbrace -b_1,\ldots,-b_{m-1}\right\rbrace\).
  \end{itemize}
  Then, \(\dim {V_Y}>0\).
\end{theorem}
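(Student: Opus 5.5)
We follow Debarre's approach for flexes. By Theorem~\ref{theorem:hierarchyTrickDegeneratedGeneralized}, it suffices to produce constants \(\alpha_{j,i}\) and constant vector fields \(D_1\neq 0,D_2,\ldots\) such that \(P_s=0\) for every \(s\geq 1\). We build these data inductively in \(s\). By Lemma~\ref{lemma:reducing2QsGeneralization}, once \(P_1,\ldots,P_{s-1}\) have been killed, the next section \(P_s\) can be killed by a choice of \(\alpha_{j,s}\) and \(D_s\) if and only if \(Q_s\) restricted to \(Z:=\Theta_u\cdot\Theta_{-u}\cdot\Theta_{b_1}\cdots\Theta_{b_{m-1}}\) vanishes.

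The base case is the first hypothesis. The tangent \(m\)-plane through \(K(u),K(b_1),\ldots,K(b_m)\), tangent at \(K(u)\), is exactly a degenerate \((m+2)\)-secant. As noted at the start of this subsection, this is equivalent to \(P_1=0\) for some \(D_1\neq0\) and some \(\alpha_{j,1}\). We normalize \(\alpha_{m+2}(\epsilon)=\epsilon\), as the paper does.

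For the inductive step, assume \(P_1=\cdots=P_{s-1}=0\). We must show \(Q_s|_Z=0\). We take the generating series \(P(z,\epsilon)\) and write it, using the addition formula, as a combination of products \(\theta(z+x+\tfrac12C)\theta(z-x-\tfrac12C)\). We then restrict to \(Z\). On \(Z\) the factors \(\theta_{\pm u}\) and \(\theta_{b_j}\) vanish for \(j\le m-1\).

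The first step is to reorganize the relations coming from \(P_1=\cdots=P_{s-1}=0\). We apply the derivatives \(D_1\) and \(D_i\) to these relations and restrict to \(Z\). This expresses the lower-order data, namely \(D_i\theta_{\pm u}|_Z\) and \(D_i\theta_{b_j}|_Z\), in terms of the others. It plays the role of Debarre's Koszul-complex step: since \(Z\) is a reduced complete intersection, a section of \(\mathcal{O}_X(2\Theta)\) vanishing on \(Z\) lies in the ideal generated by \(\theta_u,\theta_{-u},\theta_{b_1},\ldots,\theta_{b_{m-1}}\). Iterating the Koszul resolution then lets us write \(Q_s|_Z\) as a polynomial expression in these derivative data, multiplied by the remaining factors \(\theta_{-b_j}\), \(\theta_{\pm b_m}\), and their derivatives.

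The second step, which we expect to be the main obstacle, is to show that this expression vanishes using the second family of hypotheses. For each \(\mu=-b_k\), the non-degenerate \(m\)-plane through \(K(\pm u-\tfrac12(b_m+\mu)),\ldots,K(b_m-\tfrac12(b_m+\mu))\) translates, via the addition formula, into an identity \(\sum_x c_x\,\theta(z+x-\tfrac12(b_m+\mu))\theta(z-x+\tfrac12(b_m+\mu))=0\). Restricting this identity to \(Z\) should produce a relation linking \(\theta_{-b_k}\) and \(\theta_{b_m}\) along \(Z\).

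The reducedness of \(Z\) lets us test \(Q_s|_Z=0\) pointwise on a dense open subset, so the argument splits into the \(m-1\) components of the extra data \(\theta_{-b_k}|_Z\), \(k=1,\ldots,m-1\). Each of the \(m-1\) secants kills one of the residual terms in \(Q_s|_Z\). What I would try first is to check the case \(m=1\), where this must reduce verbatim to Debarre's computation, and then handle the residual terms one \(b_k\) at a time.

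The hard part is bookkeeping. One must verify that after using \(P_{<s}=0\), the only surviving obstruction in \(Q_s|_Z\) is a linear combination of exactly these \(m-1\) translated-secant identities, independently of \(s\). This is what makes the induction close.

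Once \(Q_s|_Z=0\), Lemma~\ref{lemma:reducing2QsGeneralization} yields \(\alpha_{j,s}\) and \(D_s\) with \(P_s=0\). By induction all \(P_s\) vanish, and Theorem~\ref{theorem:hierarchyTrickDegeneratedGeneralized} gives \(\dim_{-a_1-a_2}V_Y>0\), hence \(\dim V_Y>0\).
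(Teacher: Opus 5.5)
\paragraph{A gap in the inductive step.}
Your reduction to proving \(Q_s|_Z=0\) (given \(P_1=\cdots=P_{s-1}=0\)) matches the paper, and so does your base case. Your guess about the \(\mu=-b_k\) secants is also right. If you substitute \(z\mapsto z-\tfrac12(b_m-b_k)\) in the addition-formula identity, every term vanishes on \(Z\) except the \(b_m\) one. Hence \(\theta_{b_m}\theta_{-b_k}|_Z=0\); its coefficient is nonzero because there are no lower secants. Likewise \(P_1=0\) restricts to \(\theta_{b_m}\theta_{-b_m}|_Z=0\).

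However, the inductive step, which is the whole content of the theorem, is only a plan that you label as ``bookkeeping''. Your proposed mechanism is: differentiate the lower relations, iterate Koszul resolutions, and check that \(Q_s|_Z\) is a linear combination of the secant identities. This is not substantiated. Since \(P_s|_Z=Q_s|_Z\), one computes directly \(Q_s|_Z=\theta_{b_m}\cdot\Delta_{s-1}\theta_{-b_m}|_Z\), and there is no visible reason for this to lie in the ideal generated by the products \(\theta_{b_m}\theta_{-b_j}\). The paper never proves such a linear statement. (The Koszul sequences are used only in Lemma~\ref{lemma:reducing2QsGeneralization}, not in proving \(Q_s|_Z=0\).)

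\paragraph{The missing idea.}
The paper uses a two-sided shift followed by squaring. Set \(R(z,\epsilon)=P(z+\tfrac12C(\epsilon),\epsilon)\) and \(T(z,\epsilon)=P(z-\tfrac12C(\epsilon),\epsilon)\). If \(P_0=\cdots=P_{s-1}=0\), then \(R_s=T_s=P_s\).

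The shift by \(+\tfrac12C\) makes the second factor of every product \(\epsilon\)-free. These factors are \(\theta_{u},\theta_{-u},\theta_{b_j}\), and they vanish on \(Z\) for \(j\le m-1\). Therefore
\[
R|_Z=\epsilon\,\theta_{b_m}(z)\,\theta_{-b_m}(z+C(\epsilon)), \qquad R_s|_Z=\theta_{b_m}\cdot\Delta_{s-1}\theta_{-b_m}.
\]
The shift by \(-\tfrac12C\) instead makes the first factor \(\epsilon\)-free, so
\[
T|_Z=\sum_{j=1}^m\alpha_{j+2}(\epsilon)\,\theta_{-b_j}(z)\,\theta_{b_j}(z-C(\epsilon)).
\]
Hence \(P_s^2|_Z=R_sT_s|_Z\) is a sum of terms, each divisible by some \(\theta_{b_m}\theta_{-b_j}|_Z\) with \(1\le j\le m\). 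All of these vanish: for \(j=m\) by the tangent secant, and for \(j\le m-1\) by the \(m-1\) translated secants.

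Only at this point does reducedness of \(Z\) enter, to pass from \(P_s^2|_Z=0\) to \(P_s|_Z=0\). That is its actual role, not a dense-open check of a linear identity. Without this device, or some substitute giving an explicit formula for \(Q_s|_Z\) in which the secant relations appear, your induction does not close.
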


Note that when the intersection of the $m+1$ translates of the theta divisor is empty,
the additional hypothesis is automatically satisfied, since any section vanishes on the empty scheme.

\begin{proof}
	By \ref{theorem:hierarchyTrickDegeneratedGeneralized} and \ref{lemma:reducing2QsGeneralization} it is enough to prove that for any integer \(s\geq 2\) one has that
		\begin{equation*}
		P_1=\cdots=P_{s-1}=0 \Rightarrow P_s \text{ (or \(Q_s\) vanishes on } \Theta_{u}\cdot\Theta_{-u}\cdot\Theta_{b_1}\ldots\cdot\Theta_{b_{m-1}}.
		\end{equation*}
	Set
		\begin{equation*}
		R(z,\epsilon)=\sum_{s\geq 0}R_s(z) \epsilon^s := P\left(z+\frac{1}{2}C(\epsilon),\epsilon\right).
		\end{equation*}
	Assume that \(P_1=\cdots=P_{s-1}=0\), then \({R_1=\cdots=R_{s-1}=0}\) and that \(P_s=R_s\). Now, we have that:
	\begin{align*}
		R(z,\epsilon)	& = P\left(z+\tfrac{1}{2}C(\epsilon)\right) \\
					& = e^{\tfrac{1}{2} \sum_{r\geq 0} \Delta_r} P(z,\epsilon) \\
					& = \sum_{s\geq 0}\epsilon^s \sum_{j=0}^s \Delta_j P_{s-j}.
	\end{align*}
	Hence, we will show that \(R_s\) vanishes on \(G=\Theta_{u}\cdot\Theta_{-u}\cdot\Theta_{b_1}\ldots\cdot\Theta_{b_{m-1}}\).
	\\ We have that
		\begin{equation} \label{equation:R_sRestrictedToG}
			\left. R_s \right|_G = \Delta_{s-1}\theta_{-{b_m}}\cdot\theta_{b_m}.
		\end{equation}
	On the other hand, we set
	\begin{equation*}
	T(z,\epsilon)=\sum_{s\geq 0}T_s(z) \epsilon^s := P\left(z-\frac{1}{2}C(\epsilon),\epsilon\right),
	\end{equation*}
	and \(e^{-D(\epsilon)}=\sum_{s\geq 0}\Delta^{-}_s\).

	Similarly to \(P(z,\epsilon)\), under the vanishing of \(P_1,\ldots,P_{s-1}\) by hypothesis, we get that \(T_s=P_s=R_s\).
	In particular,
	\begin{equation}
		\left. T_s \right|_G = \Delta_{s-1}^{-}\theta_{b_m}\theta_{-b_m}+\sum_{j=1}^{m-1}\sum_{l=1}^s \alpha_{j+2,l} \theta_{-b_j}\cdot \left(\Delta_{s-j}^{-}\theta_{b_j}\right),
	\end{equation}
	Hence, \(R_s^2=R_sT_s\) so that we compute
	\begin{align*}
		\left. R_s^2 \right|_G 	& =  \left(\Delta_{s-1}^{-}\theta_{b_m}\right)\cdot\theta_{b_m}\theta_{-b_m}\cdot \left(\Delta_{s-1}\theta_{-{b_m}}\right) \\
								& +  \theta_{b_m}\theta_{-b_j}\cdot\sum_{j=1}^{m-1}\sum_{l=1}^s \alpha_{j+2,l} \left(\Delta_{s-j}^{-}\theta_{b_j}\right)\cdot(\Delta_{s-1}\theta_{-{b_m}})
	\end{align*}
	which is zero by the \(m-1\) different non-degenerate \((m+2)\)-secant \(m\)-planes
    and one degenerate \((m+2)\)-secant \(m\)-plane.
	Since the intersection is reduced, \(R_s=0\), and we conclude the proof.
\end{proof}

\section{Further work}\label{sec:furtherWork}
Following the strategy introduced by Debarre \cite{DebarreTrisecants1},
we generalize it here to arbitrary \(m\). When \(m=1\) our result recovers exactly Debarre’s theorem.
The remaining types of \((m+2)\)-secant \(m\)-plane (whose classification grows combinatorially with \(m\) are left open;
our inductive procedure in Lemma \ref{lemma:reducing2QsGeneralization} via iterated Koszul complexes provides a natural template for addressing
them in future work.

It remains open to study the other cases for an Artinian scheme on length \(m\),
and to remove or get a better understanding of the condition of reducedness of the intersection of
the Theta divisors that we used.

\printbibliography{}

@article{Welters1984,
AUTHOR = {Welters, G. E.},
 TITLE = {A criterion for {J}acobi varieties},
JOURNAL = {Ann. of Math. (2)},
FJOURNAL = {Annals of Mathematics. Second Series},
VOLUME = {120},
  YEAR = {1984},
NUMBER = {3},
 PAGES = {497--504},
  ISSN = {0003-486X},
MRCLASS = {14H40 (14K10)},
MRNUMBER = {769160},
   DOI = {10.2307/1971084},
}

@article{Krichever2010,
AUTHOR = {Krichever, Igor},
 TITLE = {Characterizing {J}acobians via trisecants of the {K}ummer
    variety},
JOURNAL = {Ann. of Math. (2)},
FJOURNAL = {Annals of Mathematics. Second Series},
VOLUME = {172},
  YEAR = {2010},
NUMBER = {1},
 PAGES = {485--516},
  ISSN = {0003-486X},
MRCLASS = {14K05 (14H40 14H42 14H70 37K20)},
MRNUMBER = {2680424},
   DOI = {10.4007/annals.2010.172.485},
}

@incollection{KricheverFlexSolved,
AUTHOR = {Krichever, I.},
 TITLE = {Integrable linear equations and the {R}iemann-{S}chottky
    problem},
BOOKTITLE = {Algebraic geometry and number theory},
SERIES = {Progr. Math.},
VOLUME = {253},
 PAGES = {497--514},
PUBLISHER = {Birkh\"{a}user Boston, Boston, MA},
  YEAR = {2006},
MRCLASS = {14H70 (14H42 37K20)},
MRNUMBER = {2263198},
   DOI = {10.1007/978-0-8176-4532-8\_8},
}

@article{DebarreTrisecants1,
AUTHOR = {Debarre, Olivier},
 TITLE = {Trisecant lines and {J}acobians},
JOURNAL = {J. Algebraic Geom.},
FJOURNAL = {Journal of Algebraic Geometry},
VOLUME = {1},
  YEAR = {1992},
NUMBER = {1},
 PAGES = {5--14},
  ISSN = {1056-3911},
MRCLASS = {14H42 (14K10)},
MRNUMBER = {1129836},
}
\end{document}